\theoremstyle{plain}
\newtheorem{SConjecture}[subsection]{Singer's Conjecture}
\newtheorem{subSCConjecture}[subsection]{Singer's Conjecture for Coxeter groups}
\newtheorem{Mainthm}[subsection]{Main Theorem}
\newtheorem{Theorem}[subsection]{Theorem}
\newtheorem{Lemma}[subsection]{Lemma}
\theoremstyle{definition}
\DeclareMathOperator{\Vol}{Vol}
\newcommand{\cs}{\mathcal{S}}
\newcommand{\cH}{\mathcal{H}}
\newcommand{\Ltwo}{L^2}
\newcommand{\mfh}{\mathfrak{h}}
\def\l{\operatorname{\ell}}
\newcommand{\ltwo}{\l^2}
\newcommand{\St}{\operatorname{St}}
\newcommand{\Edge}{\operatorname{Edge}}
\newcommand{\gS}{\Sigma}
\newcommand{\ga}{\alpha}
\newcommand{\gb}{\beta}
\newcommand{\BS}{\mathbb{S}}
\newcommand{\BH}{\mathbb{H}}
\newcommand{\BR}{\mathbb{R}}
\newcommand{\BE}{\mathbb{E}}
\newcommand{\BN}{\mathbb{N}}
\newenvironment{enumeratei}{\begin{enumerate}[\upshape (i)]}
        {\end{enumerate}}
\newenvironment{enumeratea}{\begin{enumerate}[\upshape 
(a)]}{\end{enumerate}}
\numberwithin{equation}{section}
\begin{document}

\title{On the three-dimensional Singer Conjecture for Coxeter groups}

\author{Timothy A. Schroeder}

\date{June 30, 2009}
\maketitle

\begin{abstract}
We give a proof of the Singer conjecture (on the vanishing of reduced $\ltwo$-homology except in the middle dimension) for the Davis Complex $\gS$ associated to a Coxeter system $(W,S)$ whose nerve $L$ is a triangulation of $\BS^2$.  We show that it follows from a theorem of Andreev, which gives the necessary and sufficient conditions for a classical reflection group to act on $\BH^3$. 
\end{abstract}

\section{Introduction}\label{s:intro}
Let $(W,S)$ denote a \emph{Coxeter system}: $S$ is a finite set of generators and for any pair $\{s,t\}$ of generators there is a particular relation $m_{st}\in\BN\cup\{\infty\}$ such that $(st)^{m_{st}}=1$ with the rule that $m_{st}=1$ if and only if $s=t$; these are the only relations.  (See \cite{humphreys} or \cite{davisbook}).  Denote by $L$ the nerve of $(W,S)$.  ($L$ is a simplicial complex with vertex set $S$, the precise definition will be given in section \ref{ss:davis}.)  In several papers (e.g., \cite{davisannals}, \cite{davisbook}, and \cite{davismoussong}), M. Davis describes a construction which associates to any Coxeter system $(W,S)$, a simplicial complex $\gS(W,S)$, or simply $\gS$ when the Coxeter system is clear, on which $W$ acts properly and cocompactly.  The two salient features of $\gS$ are that (1) it is contractible and (2) that it admits a cellulation under which the nerve of each vertex is $L$.  It follows that if $L$ is a triangulation of $\BS^{n-1}$, $\gS$ is an $n$-manifold.

The following conjecture is attributed to Singer.  
\begin{SConjecture}\label{conj:singer} If $M^{n}$ is a closed aspherical manifold, then the reduced $\ltwo$-homology of $\widetilde{M}^n$, $\cH_{i}(\widetilde{M}^{n})$, vanishes for all $i\neq\frac{n}{2}$.
\end{SConjecture}
\noindent For details on $\ltwo$-homology theory, see \cite{davismoussong}, \cite{do2} and \cite{eckmann}.

Now, if $G$ is a torsion-free subgroup of finite index in $W$, then $G$ acts freely on $\gS$ and $\gS/G$ is a finite complex.  By $(1)$, $\gS/G$ is aspherical.  Hence, if $L$ is homeomorphic to an $(n-1)$-sphere, Davis' construction gives examples of closed aspherical $n$-manifolds and Conjecture \ref{conj:singer} for such manifolds becomes the following.

\begin{subSCConjecture}\label{conj:singerc} Let $(W,S)$ be a Coxeter group such that its nerve, $L$, is a triangulation of $\BS^{n-1}$.  Then $\cH_{i}(\gS)=0$ for all $i\neq\frac{n}{2}$.
\end{subSCConjecture}

Conjecture \ref{conj:singer} holds for elementary reasons in dimensions $\leq 2$.  In \cite{do2}, Davis and Okun show that \ref{conj:singerc} holds for $n=3$ when $(W,S)$ is \emph{right-angled} (this means that generators either commute, or have no relation).  They do this in (at least) two ways, one of which is a direct calculation of the reduced $\ltwo$-homology using a Mayer-Vietoris argument (Chapter 10).  We follow that method here, proving the result for \emph{arbitrary} Coxeter systems with nerve $\BS^2$.  This paper is a precursor to a JSJ-decomposition for three-dimensional Davis manifolds, which the author details in \cite{schroedergeom}, and from which Conjecture \ref{conj:singerc} follows as a Corollary.  Also, in \cite{schroedereven}, he uses the three-dimensional case to establish \ref{conj:singerc} in the case $(W,S)$ is \emph{even} and $L$ is a flag triangulation of $\BS^3$.      

%
\section{The Davis complex and $\ell^2$-homology}
Let $(W,S)$ be a Coxeter system.  Given a subset $U$ of $S$, define $W_{U}$ to be the subgroup of $W$ generated by the elements of $U$.  A subset $T$ of $S$ is \textit{spherical} if $W_T$ is a finite subgroup of $W$.  In this case, we will also say that the subgroup $W_{T}$ is spherical.  Denote by $\cs$ the poset of spherical subsets of $S$, partially ordered by inclusion.  Given a subset $V$ of $S$, let $\cs_{\geq V}:=\{T\in \cs|V\subseteq T\}$.  Similar definitions exist for $<, >, \leq$.  For any $w\in W$ and $T\in \cs$, we call the coset $wW_{T}$ a \emph{spherical coset}.  The poset of all spherical cosets we will denote by $W\cs$.  

\subsection{The Davis complex}\label{ss:davis}
Let $K=|\cs|$, the geometric realization of the poset $\cs$.  It is a finite simplicial complex.  Denote by $\gS(W,S)$, or simply $\gS$ when the system is clear, the geometric realization of the poset $W\cs$.  This is the Davis complex.  The natural action of $W$ on $W\cs$ induces a simplicial action of $W$ on $\gS$ which is proper and cocompact.  $\gS$ is a model for $\underline{EW}$, a \emph{universal space for proper $W$-actions}.  (See Definition \cite[2.3.1]{davisbook}.)  $K$ includes naturally into $\gS$ via the map induced by $T\rightarrow W_{T}$.  So we view $K$ as a subcomplex of $\gS$, and note that $K$ is a strict fundamental domain for the action of $W$ on $\gS$.  

The poset $\cs_{>\emptyset}$ is an abstract simplicial complex.  This simply means that if $T\in\cs_{>\emptyset}$ and $T'$ is a nonempty subset of $T$, then $T'\in \cs_{>\emptyset}$.  Denote this simplicial complex by $L$, and call it the \emph{nerve} of $(W,S)$.  The vertex set of $L$ is $S$ and a non-empty subset of vertices $T$ spans a simplex of $L$ if and only if $T$ is spherical.  Define a labeling on the edges of $L$ by the map $m:\Edge(L)\rightarrow \{2,3,\ldots\}$, where $\{s,t\}\mapsto m_{st}$.  This labeling accomplishes two things: (1) the Coxeter system $(W,S)$ can be recovered (up to isomorphism) from $L$ and (2) the $1$-skeleton of $L$ inherits a natural piecewise spherical structure in which the edge $\{s,t\}$ has length $\pi-\pi/m_{st}$.  $L$ is then a \emph{metric flag} simplicial complex (see Definition \cite[I.7.1]{davisbook}).  This means that any finite set of vertices, which are pairwise connected by edges, spans a simplex of $L$ if an only if it is possible to find some spherical simplex with the given edge lengths.  In other words, $L$ is ``metrically determined by its $1$-skeleton.''  

For the purpose of this paper, we will say that labeled (with integers $\geq 2$) simplicial complexes are \emph{metric flag} if they correspond to the labeled nerve of some Coxeter system.  We will often indicate these complexes simply with their $1$-skeleton, understanding the underlying Coxeter system and Davis complex.  We write $\gS_L$ to denote the Davis complex associated to the nerve $L$ of $(W,S)$.  

\noindent
\textbf{Special subcomplexes.}  Suppose $A$ is a full subcomplex of $L$.  Then $A$ is the nerve for the subgroup generated by the vertex set of $A$.  We will denote this subgroup by $W_{A}$.  (This notation is natural since the vertex set of $A$ corresponds to a subset of the generating set $S$.)  Let $\cs_{A}$ denote the poset of the spherical subsets of $W_A$ and let $\gS_{A}$ denote the Davis complex associated to $(W_{A},A^{0})$.  The inclusion $W_{A}\hookrightarrow W_{L}$ induces an inclusion of posets $W_{A}\cs_{A}\hookrightarrow W_{L}\cs_{L}$ and thus an inclusion of $\gS_{A}$ as a subcomplex of $\gS_{L}$.  Such a subcomplex will be called a \emph{special subcomplex} of $\gS_{L}$.  Note that $W_{A}$ acts on $\gS_{A}$ and that if $w\in W_{L}-W_{A}$, then $\gS_{A}$ and $w\gS_{A}$ are disjoint copies of $\gS_{A}$.  Denote by $W_{L}\gS_{A}$ the union of all translates of $\gS_{A}$ in $\gS_{L}$.  

\noindent
\textbf{A mirror structure on $K$.} If $L$ is the triangulation of an $n$-sphere, then we have a another cellulation of $K$ and $\gS$.  For each $T\in\cs$, let $K_T$ denote the geometric realization of the subposet $\cs_{\geq T}$.  $K_T$ is a triangulation of a $k$-cell, where $k=n+1-|T|$.  We then define a new cell structure on $K$ by declaring the family $\{K_T\}_{T\in\cs}$ to be the set of cells in $K$.  We write $K_L$ to indicate $K$ equipped with this cellulation and note that it extends to a cellulation of $\gS_L$.  Since our concern is the case $L$ is a triangulation of $\BS^2$, we assume this cellulation of $\gS_L$.  

The boundary complex of $K_L$ is combinatorially dual to $L$, so $K_L$ has codimension $1$ faces corresponding the elements of $S$.  In fact, if $L$ is \emph{any} cell complex homeomorphic to $\BS^2$, in the strict sense that any non-empty intersection of two cells is a cell, then $L$ is combinatorially dual to the boundary complex of a $3$-dimensional convex polytope, which we will denote by $K_{L}$.  If the edges of $L$ are labeled with integers $\geq 2$, (e.g. $L$ is the labeled nerve of a Coxeter system) then we assign dihedral angles to $K_L$ so that the angle between faces dual to vertices $s$ and $t$ is $\pi/m_{st}$, where $m_{st}$ is the label on the edge between $s$ and $t$.  This assignment defines a classical reflection group generated by the reflections in the faces of $K_{L}$ with relations prescribed by the dihedral angles.  

\noindent
\textbf{A cellulation of $\gS$ by Coxeter cells.}  $\gS$ has a coarser cell structure: its cellulation by ``Coxeter cells.''  (References for this section include \cite{davisbook} and \cite{do2}.)  
%
The features of the Coxeter cellulation are summarized by \cite[Proposition 7.3.4]{davisbook}.  We note here that, under this cellulation, the link of each vertex is $L$.  It follows that if $L$ is a triangulation of $\BS^{n-1}$, then $\gS$ is a topological $n$-manifold.  

%
\subsection{Previous results in $\ell^2$-homology}\label{ss:previous}
Let $L$ be a metric flag simplicial complex (see subsection \ref{ss:davis}), and let $A$ be a full subcomplex of $L$.  The following notation will be used throughout.
\begin{align}
\mfh_i(L) &:= \cH_i(\gS_L)\label{e:not1}\\
\mfh_i(A) &:= \cH_i(W_L\gS_A)\label{e:not2}\\
\gb_{i}(A)&:= \dim_{W_L}(\mfh_i(A)).\label{e:not4}
\end{align}
Here $\dim_{W_L}(\mfh_i(A))$ is the von Neumann dimension of the Hilbert $W_L$-module $W_L\gS_A$ and $\gb_{i}(A)$ is the $i^{\text{th}}$ $\ltwo$-Betti number of $W_L\gS_A$.  The notation in \ref{e:not2} and \ref{e:not4} will not lead to confusion since $\dim_{W_L}(W_L\gS_A)=\dim_{W_A}(\gS_A)$.  (See \cite{do2} and \cite{eckmann}).  

Given a simplicial complex $L$ and a full subcomplex $A\subset L$, we say that $A$ is \emph{$\ltwo$-acyclic}, if $\gb_{i}(A)=0$ for all $i$.

\noindent
\textbf{Bounded geometry.}  The following result is proved by Cheeger and Gromov in \cite{cheeggrom}.  Suppose that $X$ is a complete contractible Riemannian manifold with uniformly bounded geometry (i.e. its sectional curvature is bounded and its injectivity radius is bounded away from $0$.)  Let $\Gamma$ be a discrete group of isometries on $X$ with $\Vol(X/\Gamma)<\infty$.  Then $\dim_{\Gamma}(\cH_k(\underline{E\Gamma}))=\dim_{\Gamma}(\cH_{k}(X))$, where $\underline{E\Gamma}$ denotes a universal space for proper $\Gamma$ actions, and $\cH_{k}(X)$ denotes the space of $\Ltwo$-harmonic forms on $X$.  Of particular interest to us is the case where $X=\BH^{3}$.  For it is proved by Dodziuk in \cite{dodz} that the $\Ltwo$-homology of any odd-dimensional hyperbolic space, $\BH^{2k+1}$, vanishes.   

\noindent
\textbf{Euclidean Space.}  The Cheeger Gromov result also implies that if $\gS_{L}=\BR^n$ for some $n$, then $\mfh_{\ast}(L)$ vanishes.  


\noindent\textbf{Joins.}  If $L=L_1\ast L_2$ where each edge connecting a vertex of $L_1$ with a vertex of $L_2$ is labeled $2$, then $W_L=W_{L_1}\times W_{L_2}$ and $\gS_L=\gS_{L_1}\times\gS_{L_2}$.  We may then use K\"unneth formula to calculate the (reduced) $\ltwo$-homology of $\gS_{L}$, and the following equation from \cite[Lemma 7.2.4]{do2} extends to our situation:
\begin{equation}\label{e:rt-angledjoin}
	\gb_{k}(L_{1}\ast L_{2})=\sum_{i+j=k}\gb_{i}(L_{1})\gb_{j}(L_{2}).
\end{equation}
\noindent\textbf{Suspensions.}  If $L=P\ast L_2$, where $P$ is two points not connected by an edge and each join edge is labeled with $2$, we call $L$ a \emph{right-angled suspension}.  $\gS_{P}=\BR$ and $\mfh_{i}(P)=0$ for all $i$ (\cite[Lemma 7.3.4]{do2}).  Then by equation \ref{e:rt-angledjoin}, $L$ is $\ltwo$-acyclic.  

%
%
\section{Andreev's theorem}\label{s:andreev}
In \cite{andreev2}, Andreev gives the necessary and sufficient conditions for abstract $3$-dimensional polytopes, with assigned dihedral angles in $\left(0,\frac{\pi}{2}\right]$, to be realized as (possibly ideal) convex polytopes in $\BH^3$ (these conditions are listed below, Theorem \ref{t:2:andreev}).  In order for this convex polytope to tile $\BH^3$, the assigned dihedral angles must be integer submultiples of $\pi$.  

Let $L$ be a labeled nerve of a Coxeter system, homeomorphic to $\BS^2$.  $K_L$ has assigned dihedral angles $\pi/m_{st}$ as discussed in Section \ref{ss:davis}.  So, if $K_L$ satisfies Theorem \ref{t:2:andreev}, then it follows that $\gS_{L}=\BH^3$.  However, it is possible that $K_L$ does not satisfy Andreev's theorem.  So, for the remainder of the paper, we will show how to apply Theorem \ref{t:2:andreev} to a modification $\left[L-T\right]$ of $L$.  (Here $\left[L-T\right]$ is a cell complex homeomorphic to $\BS^2$ with labeled edges.)  If $K_{\left[L-T\right]}$, with assigned dihedral angles corresponding to the edge labeling, satisfies Andreev's theorem, then it follows that $K_{\left[L-T\right]}$ is the strict fundamental domain for the action of a reflection group on $\BH^3$.


\begin{Theorem}\label{t:2:andreev} \textup{(\cite[Theorem 2]{andreev2})} Let $P$ be an abstract three-dimensional polyhedron, not a simplex, such that three or four faces meet at every vertex.  The following conditions are necessary and sufficient for the existence in $\BH^{3}$ of a convex polytope of finite volume of the combinatorial type $P$ with the dihedral angles $\ga_{ij}\leq\frac{\pi}{2}$ (where $\ga_{ij}$ is the dihedral angle between the faces $F_{i},F_{j}$):
\begin{enumeratei}
	\item\label{i:i} If $F_{1}, F_{2}$ and $F_{3}$ are all the faces meeting at a vertex of $P$, then $\ga_{12}+\ga_{23}+\ga_{31}\geq \pi$; and if $F_{1}, F_{2}, F_{3}, F_{4}$ are all the faces meeting at a vertex of $P$ then $\ga_{12}+\ga_{23}+\ga_{34}+\ga_{41}=2\pi$.
	\item\label{i:ii} If three faces intersect pairwise but do not have a common vertex, then the angles at the three edges of intersection satisfy $\ga_{12}+\ga_{23}+\ga_{31}<\pi$.
	\item\label{i:iii} Four faces cannot intersect cyclically with all four angles $=\pi/2$ unless two of the opposite faces also intersect.  
	\item\label{i:iv} If $P$ is a triangular prism, then the angles along the base and top cannot all be $\frac{\pi}{2}$.  
	\item\label{i:v} If among the faces $F_{1},F_{2},F_{3}$ we have $F_{1}$ and $F_{2}$, $F_{2}$ and $F_{3}$ adjacent, but $F_{1}$ and $F_{3}$ not adjacent, but concurrent at one vertex and all three do not meet in one vertex, then $\ga_{12}+\ga_{23}<\pi$.
\end{enumeratei}
\end{Theorem}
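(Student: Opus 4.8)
Since this is Andreev's classical theorem, I would not reconstruct its full proof but indicate the strategy by which it is established. The plan is to translate the problem from the geometry of polytopes into linear algebra over Minkowski space and then run a continuity argument. First I would fix the combinatorial type $P$ and set up the dictionary between finite-volume convex polytopes in $\BH^3$ and configurations of outward face-normals in the Lorentzian space $\BR^{3,1}$. Realizing $\BH^3$ as a hyperboloid, each face $F_i$ is cut out by a half-space with a space-like unit normal $v_i$, normalized so that $\langle v_i,v_i\rangle=1$; two faces $F_i,F_j$ adjacent along an edge with dihedral angle $\ga_{ij}$ satisfy $\langle v_i,v_j\rangle=-\cos\ga_{ij}$, while non-adjacent faces give $|\langle v_i,v_j\rangle|\geq 1$. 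A polytope is thus encoded by its Gram matrix $G=(\langle v_i,v_j\rangle)$, which must have signature $(3,1)$, and the entire problem becomes one of solving algebraic constraints on $G$ prescribed by the angles $\ga_{ij}$.

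For necessity I would analyze the link of each vertex. Where three faces meet at a finite vertex, the link is a spherical triangle whose corner angles are exactly the dihedral angles $\ga_{12},\ga_{23},\ga_{31}$, so the spherical angle-excess forces $\ga_{12}+\ga_{23}+\ga_{31}>\pi$; at an ideal trivalent vertex the link is Euclidean and the sum equals $\pi$, and combining these gives the inequality $\geq\pi$ of \ref{i:i}. A four-valent vertex is necessarily ideal with a Euclidean quadrilateral link, and the angle-sum computation yields the equality $\ga_{12}+\ga_{23}+\ga_{34}+\ga_{41}=2\pi$ of \ref{i:i}. Conditions \ref{i:ii}--\ref{i:v} are then obtained by ruling out the degenerations in which the relevant normals fail to span a rank-four Lorentzian subspace, or in which a prismatic configuration would be forced to become ideal; each reduces to a short computation with the appropriate $3\times 3$ or $4\times 4$ minor of $G$.

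The hard part is sufficiency, which I would prove by the method of continuity. Let $\mathcal{A}_P$ be the set of angle assignments $(\ga_{ij})$ satisfying \ref{i:i}--\ref{i:v}, let $\mathcal{P}_P$ be the space of marked finite-volume hyperbolic polytopes of type $P$, and let $\Phi:\mathcal{P}_P\to\mathcal{A}_P$ record the dihedral angles. I would show $\Phi$ is a homeomorphism by verifying four points: (a) Andreev rigidity, that $\Phi$ is injective, via a Cauchy-type infinitesimal rigidity statement for the normal configuration; (b) $\Phi$ is a local homeomorphism, by applying the implicit function theorem to the Gram-matrix equations together with (a); (c) $\Phi$ is proper; and (d) $\mathcal{A}_P$ is nonempty and connected, indeed a cell-like polytopal region. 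Given (a)--(d), invariance of domain shows $\Phi(\mathcal{P}_P)$ is open and closed in the connected set $\mathcal{A}_P$, hence equal to it, which is exactly the asserted existence for every admissible angle datum.

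I expect the principal obstacle to be the properness step (c): one must control every way a family of polytopes can degenerate—vertices becoming ideal, faces collapsing together, or edges shrinking to points—and show that each such degeneration is detected by the angle data approaching one of the walls of $\mathcal{A}_P$ carved out by \ref{i:i}--\ref{i:v}. This is precisely the delicate boundary analysis where Andreev's original argument required care, and where a careful matching of the combinatorial strata of $\partial\mathcal{A}_P$ with the geometric degenerations of $\mathcal{P}_P$ (as later organized by Roeder, Hubbard, and Dunbar) is what makes the continuity argument go through. The excluded case of the simplex, and the trivalent/four-valent restriction in the hypothesis, are exactly the conditions that keep this stratification clean.
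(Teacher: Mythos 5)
This statement is not proved in the paper at all: it is Andreev's theorem, quoted verbatim and cited as \cite[Theorem 2]{andreev2}, and the paper treats it as a black box whose output (a finite-volume hyperbolic polytope, hence a classical reflection group acting on $\BH^3$, hence vanishing $\ltwo$-homology by Dodziuk and Cheeger--Gromov) is all that is used. So there is no internal proof to compare yours against, and the honest benchmark is Andreev's original argument and its later repair by Roeder, Hubbard, and Dunbar.

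Measured against that, your outline identifies the correct strategy --- the Lorentzian normal/Gram-matrix dictionary, necessity via spherical versus Euclidean vertex links, sufficiency via the continuity method with the four steps of rigidity, local homeomorphism, properness, and connectedness of the angle polytope $\mathcal{A}_P$ --- and you correctly locate the delicate points (properness, and the nonemptiness/connectedness of $\mathcal{A}_P$, which is where Andreev's original paper had the gap that Roeder--Hubbard--Dunbar closed). But as a proof it is a roadmap, not an argument: each of (a)--(d) is asserted rather than established, and each one is a substantial theorem in its own right. In particular the reduction of conditions (ii)--(v) to ``a short computation with a minor of $G$'' understates what is involved; condition (v), for instance, concerns two faces that meet only at an ideal vertex, and verifying that the listed inequalities are exactly the walls of $\mathcal{A}_P$ detected by all possible degenerations is the content of the properness step, not a corollary of it. One small imprecision worth fixing: for non-adjacent, non-intersecting half-spaces the correct Gram condition is $\langle v_i,v_j\rangle\leq -1$ under the normalization $\langle v_i,v_j\rangle=-\cos\ga_{ij}$ for adjacent faces, not merely $|\langle v_i,v_j\rangle|\geq 1$. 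For the purposes of this paper, the appropriate ``proof'' of the statement is simply the citation; if you intend to supply an actual proof, every one of your four bullet points needs to be expanded into a complete argument.
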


\noindent
\textbf{The case where $L$ is the boundary of a $3$-simplex.} If $L$ is the boundary of a $3$-simplex, then $K_{L}$ is a $3$-simplex and we are unable to apply Andreev's theorem.  However, one can check that in this case $W_{L}$ is one of the groups listed in Figure $2.2$ or $6.2$ of \cite{humphreys} (n=4).  In fact, $\gS_L=\BE^3$ or $\gS_L=\BH^3$.  Therefore, if $L$ is the boundary of a $3$-simplex, then it is $\ltwo$-acyclic.   

%
\noindent
\textbf{Applying Andreev's theorem.} Suppose now that $L$ is not the boundary of a $3$-simplex.  If $s$ is a vertex of $L$, define the \emph{link of $s$ in $L$}, $L_{s}$, to be the subcomplex of $L$ consisting of all closed simplices which are contained in simplices containing $s$, but do not themselves contain $s$.   Define the \emph{star of $s$ in $L$}, $\St_{L}(s)$, to be the subcomplex of $L$ consisting of all closed simplices which contain $s$.  

The \emph{valence} of a vertex $s$ of $L$ is the number of vertices in its link.  We say that a vertex $s$ is \emph{3-Euclidean} if $s$ has valence $3$ and if $s_0, s_1, s_2$ are the vertices in this link, then 
\[\frac{\pi}{m_{s_0s_1}}+\frac{\pi}{m_{s_1s_2}}+\frac{\pi}{m_{s_2s_0}}=\pi.\]
We say that $s\in T$ is \emph{4-Euclidean}, if $s$ has valence $4$ and if $s_{0}, s_{1}, s_{2}, s_{3}$ are the vertices in this link, then $m_{s_{i}s_{i+1}}=2$ for $i=0,1,2,3$ (mod($4$)).  We'll say that the vertex $s$ is \emph{Euclidean} if it is either $3$- or $4$-Euclidean.

\begin{Lemma}\label{l:full} Let $s$ be a Euclidean vertex.
\begin{enumeratea}
	\item If $s$ is a $3$-Euclidean vertex, then $L_s$ and $\St_{L}(s)$ are full subcomplexes of $L$.  		\item  If $s$ is a $4$-Euclidean vertex and $L$ is not the suspension of a $3$-gon, then $L_s$ and $\St_{L}(s)$ are full subcomplexes of $L$.
\end{enumeratea} 
\end{Lemma}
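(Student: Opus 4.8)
The plan is to exploit two facts about $L$. First, since $L$ triangulates $\BS^2$, the link $L_s$ of any vertex $s$ is a cycle—the $k$-cycle, where $k$ is the valence of $s$—and $\St_L(s)$ is the simplicial cone on $L_s$. Second, $L$ is metric flag, so a set of pairwise-adjacent vertices $\{a,b,c\}$ spans a $2$-simplex of $L$ if and only if a spherical triangle with the prescribed side lengths $\pi-\pi/m_{ab}$, $\pi-\pi/m_{bc}$, $\pi-\pi/m_{ca}$ exists, equivalently if and only if
\[
\frac{1}{m_{ab}}+\frac{1}{m_{bc}}+\frac{1}{m_{ca}}>1.
\]
Fullness of $L_s$ (respectively $\St_L(s)$) can fail only through a simplex of $L$ spanned by vertices of $L_s$ (respectively of $\St_L(s)$) that is not already present—namely a ``chord'' joining two non-consecutive vertices of the cycle $L_s$, or a $2$-simplex spanned by three such vertices. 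So in each case I must rule these out.

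For (a), $L_s$ is the $3$-cycle on $s_0,s_1,s_2$, in which every pair of vertices is already joined by an edge; hence there are no chords to consider and the only possible obstruction is the $2$-simplex $\{s_0,s_1,s_2\}$. But these three pairwise-adjacent vertices satisfy $\frac{1}{m_{s_0s_1}}+\frac{1}{m_{s_1s_2}}+\frac{1}{m_{s_2s_0}}=1$ by the $3$-Euclidean hypothesis, so by the metric flag criterion they span no $2$-simplex of $L$, and $L_s$ is full. Since $\St_L(s)$ contains every edge among $\{s,s_0,s_1,s_2\}$ together with the three triangles $s\,s_is_{i+1}$, and since $\{s_0,s_1,s_2\}$ spans no $2$-simplex, there is no further simplex of $L$ on these four vertices; thus $\St_L(s)$ is full as well.

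For (b), $L_s$ is the $4$-cycle on $s_0,s_1,s_2,s_3$, and now the only possible obstructions are the two diagonals $s_0s_2$ and $s_1s_3$ (a $2$-simplex on three of the $s_i$ would require one of these diagonals to be an edge). The main step—and the principal obstacle—is to show that if a diagonal, say $s_0s_2$, is an edge of $L$, then $L$ must be the suspension of a $3$-gon, contrary to hypothesis. Assuming $s_0s_2$ is an edge, the triples $\{s_0,s_1,s_2\}$ and $\{s_0,s_2,s_3\}$ are pairwise adjacent, and since $m_{s_0s_1}=m_{s_1s_2}=m_{s_2s_3}=m_{s_3s_0}=2$ by the $4$-Euclidean condition, each satisfies $\frac{1}{2}+\frac{1}{2}+\frac{1}{m_{s_0s_2}}>1$; so the metric flag criterion forces both $s_0s_1s_2$ and $s_0s_2s_3$ to be $2$-simplices of $L$. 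Together with the four star triangles $s\,s_0s_1,\ s\,s_1s_2,\ s\,s_2s_3,\ s\,s_3s_0$, these six triangles already supply two triangles along each of their nine edges. As $L$ is a $2$-manifold, every edge of $L$ lies in exactly two triangles, so no further triangle of $L$ can be edge-adjacent to this subcomplex; connectedness of $L$ then forces the subcomplex to be all of $L$, and one checks directly that it is the suspension of the $3$-gon on $\{s,s_0,s_2\}$ with poles $s_1,s_3$. Taking the contrapositive, if $L$ is not the suspension of a $3$-gon then neither diagonal is an edge, so $L_s$ has neither a chord nor a filling $2$-simplex and is full; and then, exactly as in (a), $\St_L(s)$ is full, since its only missing pairs among $\{s,s_0,s_1,s_2,s_3\}$ are precisely the two diagonals, which are not edges of $L$.
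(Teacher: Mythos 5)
Your proof is correct and follows the same line as the paper's (which merely asserts these facts in two sentences): the only possible failures of fullness are a filling $2$-simplex on a $3$-Euclidean link or a diagonal of a $4$-Euclidean link, and the latter forces $L$ to be the suspension of a $3$-gon. The one small divergence is in (a), where the paper rules out the filling triangle by noting it would make $L$ the boundary of a $3$-simplex (excluded by the standing assumption of that section), while you rule it out via the metric-flag condition since $\sum 1/m_{s_is_j}=1$ makes $\{s_0,s_1,s_2\}$ non-spherical; both are valid, and yours supplies the details the paper omits.
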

\begin{proof} (a): This is immediate since $L$ is not the boundary of a $3$-simplex.  

(b): For a $4$-Euclidean vertex $s$, $L_s$ and $\St_L(s)$ can only fail to be full if $L$ is the suspension of a $3$-gon.
%
\end{proof}

\begin{Lemma}\label{l:linkofeuclidean} Suppose that $L$ is not the suspension of a $3$-gon and let $s$ be a Euclidean vertex of $L$.  Then $L_{s}$ is $\ltwo$-acyclic.    
\end{Lemma}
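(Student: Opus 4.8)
The plan is to identify $L_s$ explicitly as a labeled complex, recognize that its associated Davis complex is a flat Euclidean plane, and then quote the Cheeger--Gromov computation for $\BR^n$ to conclude the vanishing. The whole argument is essentially a recognition problem, so the work is front-loaded into pinning down $L_s$.

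First I would record the shape of $L_s$. Since $L$ is a triangulation of $\BS^2$, the link of any vertex is a circle, triangulated as an $m$-cycle where $m$ is the valence of $s$. By Lemma \ref{l:full}---whose hypotheses ($L$ not the boundary of a $3$-simplex in the $3$-valent case, $L$ not the suspension of a $3$-gon in the $4$-valent case) are exactly the ones in force here---$L_s$ is a \emph{full} subcomplex of $L$. This guarantees both that $\gb_i(L_s)$ is defined via \eqref{e:not2} and \eqref{e:not4}, and, crucially, that $L_s$ carries no simplices beyond its boundary cycle. Concretely, for a $3$-Euclidean vertex $L_s$ is the $3$-cycle on $s_0,s_1,s_2$ with labels $m_{s_0s_1},m_{s_1s_2},m_{s_2s_0}$, and fullness (equivalently, $L\neq\partial$ of a $3$-simplex) forces $\{s_0,s_1,s_2\}$ not to span a $2$-simplex of $L$. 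For a $4$-Euclidean vertex $L_s$ is the $4$-cycle on $s_0,s_1,s_2,s_3$ with every edge labeled $2$, and fullness rules out the diagonals $\{s_0,s_2\}$ and $\{s_1,s_3\}$ being edges of $L$.

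Next I would compute $\gS_{L_s}$ in each case. In the $3$-Euclidean case the defining relation $\frac{\pi}{m_{s_0s_1}}+\frac{\pi}{m_{s_1s_2}}+\frac{\pi}{m_{s_2s_0}}=\pi$ is precisely the condition that $W_{L_s}$ be the Euclidean (affine) triangle reflection group with those dihedral angles, so $\gS_{L_s}=\BE^2=\BR^2$. In the $4$-Euclidean case the diagonal-free $4$-cycle with all labels $2$ is the right-angled join $\{s_0,s_2\}\ast\{s_1,s_3\}$ of two two-point sets; by the Joins remark $W_{L_s}$ splits as a product of two infinite dihedral groups and $\gS_{L_s}=\BR\times\BR=\BR^2$ (alternatively one applies the Suspensions remark directly, since $L_s$ is a right-angled suspension).

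Finally, because $\gS_{L_s}=\BR^2$ in both cases, the Euclidean Space consequence of the Cheeger--Gromov theorem gives $\cH_\ast(\gS_{L_s})=0$; combined with the compatibility of von Neumann dimensions noted after \eqref{e:not4} (namely $\dim_{W_L}(W_L\gS_{L_s})=\dim_{W_{L_s}}(\gS_{L_s})$), this yields $\gb_i(L_s)=0$ for all $i$, i.e.\ $L_s$ is $\ltwo$-acyclic. The only genuine obstacle is the first step---certifying that $L_s$ has no stray diagonal edges or faces---and this is exactly what the hypothesis ``not the suspension of a $3$-gon'' secures through Lemma \ref{l:full}; the remaining steps are routine identifications of standard Euclidean reflection groups.
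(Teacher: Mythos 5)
Your proposal is correct and follows the same route as the paper: the paper's proof is the one-line observation that $\gS_{L_s}=\BR^2$, hence $\gb_i(L_s)=0$, and your argument simply supplies the justification (identifying $L_s$ as a diagonal-free labeled $3$- or $4$-cycle via Lemma \ref{l:full} and recognizing the Euclidean triangle group, respectively the product of two infinite dihedral groups). The extra detail is accurate but not a different method.
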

\begin{proof} $\gS_{L_{s}}=\BR^{2}$.  Thus $\gb_{i}(L_{s})=0$ for all $i$.
\end{proof}

\begin{Lemma}\label{l:starofeuclidean} Suppose that $L$ is not the suspension of a $3$-gon and let $s$ be a Euclidean vertex of $L$.  Then $\St_{L}(s)$ is $\ltwo$-acyclic.
\end{Lemma}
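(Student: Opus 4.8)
The plan is to reduce the statement to the $\ltwo$--acyclicity of the link, which is already in hand. Since $\dim_{W_L}(W_L\gS_A)=\dim_{W_A}(\gS_A)$, it suffices to show that the reduced $\ltwo$--homology $\cH_i(\gS_{\St(s)})$, computed for the action of $W_{\St(s)}$, vanishes in every degree. Now $\St(s)$ is the cone $\{s\}\ast L_s$, and by Lemma~\ref{l:full} it is a full subcomplex; the essential difficulty is that the cone edges $\{s,t\}$ (with $t$ a vertex of $L_s$) carry arbitrary labels $m_{st}$, so $\St(s)$ is \emph{not} a right--angled join and neither the join formula \ref{e:rt-angledjoin} nor a K\"unneth argument is available. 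On the other hand Lemma~\ref{l:linkofeuclidean} gives $\gS_{L_s}=\BE^2$, so $L_s$ is $\ltwo$--acyclic, and the whole point is to propagate this vanishing across the cone point $s$.

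First I would exploit the reflection $s$ directly. The generator $s$ determines a wall (its fixed set) $H\subset\gS_{\St(s)}$, and a standard fact about Davis complexes identifies $H$, as a $W_{L_s}$--complex, with $\gS_{L_s}=\BE^2$; in particular $\cH_*(H)=0$. This wall separates $\gS_{\St(s)}$ into two halfspaces $\gS_+$ and $\gS_-$ with $\gS_+\cap\gS_-=H$ and $s\gS_+=\gS_-$, each halfspace being combinatorially convex and hence contractible. This is the combinatorial shadow of the geometry behind the Euclidean hypothesis: the $3$-- or $4$--Euclidean condition forces the vertex of the chamber $K_{\St(s)}$ at which the faces dual to the vertices of $L_s$ meet to be ideal, so that $\gS_{\St(s)}$ is realized as the region cut out on one side of a $W_{L_s}$--invariant family of walls.

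Next I would assemble a Mayer--Vietoris sequence in reduced $\ltwo$--homology adapted to the decomposition $\gS_{\St(s)}=\gS_+\cup_H\gS_-$, together with the $\langle s\rangle$--symmetry interchanging the two halfspaces. Since each halfspace is contractible and the wall $H\cong\gS_{L_s}$ is $\ltwo$--acyclic, the sequence should force $\cH_i(\gS_{\St(s)})=0$ for all $i$, whence $\gb_i(\St(s))=0$ and $\St(s)$ is $\ltwo$--acyclic.

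The main obstacle is the equivariant bookkeeping. The decomposition $\gS_+\cup_H\gS_-$ is invariant only under the wall stabilizer $\langle s\rangle\times W_{L_s}$, not under all of $W_{\St(s)}$, so one cannot simply write down a Mayer--Vietoris sequence of Hilbert $W_{\St(s)}$--modules. Making the argument rigorous requires either inducing the $\ltwo$--homology of the pieces from this stabilizer up to $W_{\St(s)}$ and checking that von Neumann dimensions add correctly, or, equivalently, passing to the full $W_{\St(s)}$--invariant arrangement of $s$--walls and verifying exactness there. This is precisely where the contractibility of the halfspaces and the vanishing $\cH_*(H)=\cH_*(\gS_{L_s})=0$ must be combined with care in order to conclude vanishing for the full module $\cH_*(\gS_{\St(s)})$.
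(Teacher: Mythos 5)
Your proposal takes a genuinely different route from the paper, and it has a gap that I do not think can be repaired along the lines you sketch. The decisive step --- ``since each halfspace is contractible and the wall $H\cong\gS_{L_s}$ is $\ltwo$-acyclic, the sequence should force $\cH_i(\gS_{\St_L(s)})=0$'' --- conflates contractibility with $\ltwo$-acyclicity. Contractibility of a piece gives no control on its reduced $\ltwo$-homology as a Hilbert module: $\gS_{\St_L(s)}$ itself is contractible, and its $\ltwo$-acyclicity is precisely what is at stake; by the $s$-symmetry a halfspace carries ``half'' of whatever $\ltwo$-homology $\gS_{\St_L(s)}$ has, so assuming the halfspaces contribute nothing is circular. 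The equivariance problem you flag at the end is not mere bookkeeping, it is fatal to this route as stated: the wall stabilizer does not act cocompactly on a halfspace, so the von Neumann dimensions of the halfspace terms are not finite (nor even well-behaved under induction up to $W_{\St_L(s)}$), and there is no Mayer--Vietoris sequence of Hilbert $W_{\St_L(s)}$-modules attached to a single non-invariant wall. Note also that your argument uses the Euclidean hypothesis only through the $\ltwo$-acyclicity of $L_s$; if it worked it would prove that the cone on \emph{any} $\ltwo$-acyclic full subcomplex is $\ltwo$-acyclic, a far stronger statement that none of the tools in Section \ref{ss:previous} delivers.

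The paper's proof goes through hyperbolic geometry instead. One caps off the boundary of $\St_L(s)$ with a triangular or square cell and checks the resulting polytope $K_{[\St]}$ against Andreev's conditions (Theorem \ref{t:2:andreev}); when these hold, $W_{\St_L(s)}$ acts as a classical reflection group on $\BH^3$ with a finite-volume (possibly ideal) fundamental chamber --- in the $3$-Euclidean case it is one of the noncompact hyperbolic simplex groups --- and vanishing follows from the Cheeger--Gromov and Dodziuk results of Section \ref{ss:previous}. The degenerate cases where Andreev's theorem does not apply are exactly those in which $\St_L(s)$ is a right-angled suspension or $\gS_{\St_L(s)}=[-1,1]\times\BR^2$, and there the join formula \ref{e:rt-angledjoin} finishes the argument. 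The Euclidean hypothesis is thus used in an essential geometric way (it makes the offending vertex ideal rather than violating Andreev's inequalities), not merely through the acyclicity of the link.
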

\begin{proof} Suppose that $s$ is a $4$-Euclidean vertex.  Let $\left[\St\right]$ denote the complex obtained by capping off the boundary of $\St_{L}(s)$ with a square cell.  Then $K_{\left[\St\right]}$ clearly satisfies condition (\ref{i:i}) and satisfies conditions (\ref{i:ii})-(\ref{i:iv}) vacuously.  The only condition of Theorem \ref{t:2:andreev} that $K_{\left[\St\right]}$ may fail to meet is (\ref{i:v}).

If $K_{\left[\St\right]}$ does not satisfy this condition, then $\St_{L}(s)$ is a right-angled suspension and therefore $\ltwo$-acyclic.

%
If $K_{\left[\St\right]}$ does satisfy condition (\ref{i:v}), then $K_{\left[\St\right]}$ can be realized as an ideal, convex polytope in $\BH^3$, the ideal vertex dual to the square face of $\left[\St\right]$.  The resulting reflection group is $W_{\St_{L}(s)}$, and by the results in Section \ref{ss:previous}, $\gb_{i}(\St_{L}(s))=0$ for all $i$.

Now suppose that $s$ is a $3$-Euclidean vertex.  If each edge in $(\St_{L}(s)-L_{s})$ is labeled $2$, then $\gS_{\St_{L}(s)}=\left[-1,1\right]\times \BR^2$ ($\gS_{s}=\left[-1,1\right]$), and by equation \ref{e:rt-angledjoin}, $\mfh_{i}(\St_{L}(s))$ vanishes.  Otherwise, let $\left[\St\right]$ denote the complex obtained by capping off the boundary of $\St_{L}(s)$ with a triangular cell.  The resulting reflection group, $W_{\St_{L}(s)}$, is one of the Coxeter groups shown in Figure 6.3 of \cite{humphreys}, the non-compact hyperbolic Coxeter groups ($n=4$).  It acts properly as a classical reflection group on $\BH^3$ with fundamental chamber $K_{\left[\St\right]}$, a simplex of finite volume with one ideal vertex corresponding to the added triangular face of $\left[\St\right]$.  Therefore $\gb_{i}(\St_{L}(s))=0$ for all $i$.  
\end{proof}

Let $C$ be a $3$-circuit in $L$ and let $s_0,s_1,s_2$ be the vertices in this circuit.  We say that $C$ is an \emph{empty Euclidean $3$ circuit} if $C$ is not the link of a vertex and if 
\[\frac{\pi}{m_{s_0s_1}}+\frac{\pi}{m_{s_1s_2}}+\frac{\pi}{m_{s_2s_0}}=\pi.\]
It follows from $L$ being metric flag that $C$ is a full subcomplex.

Let $C$ be a $4$-circuit in $L$.  Order the vertices in this circuit $s_{0},s_{1},s_{2},s_{3}$ so that $s_{i}$ and $s_{i+1}$ are connected by an edge of the circuit and $s_{i}$ and $s_{i+2}$ are not connected by an edge of the circuit ($i=0,1,2,3$ mod($4$)).  We say $C$ is an \emph{empty Euclidean $4$-circuit} if (a) $C$ is not the link of a vertex, (b) $C$ is not the boundary of the union of two adjacent $2$-simplices, and (c) $m_{s_{i}s_{i+1}}=2$ ($i=0,\ldots,3$ mod($4$)).  It follows from (b) and the fact that $L$ is metric flag that $C$ is a full subcomplex.

%
\begin{Lemma}\label{l:do2:10.2.3} Suppose that $L$ has no empty Euclidean $4$-circuits and that $L$ is not the suspension of a $3$, $4$, or $5$-gon.  Then no two Euclidean vertices of $L$ are connected by an edge.
\end{Lemma}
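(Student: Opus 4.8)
The plan is to argue by contradiction: suppose two Euclidean vertices $s$ and $t$ are connected by an edge, and derive that $L$ must be one of the excluded suspensions or contain a forbidden configuration. First I would set up the combinatorics. Each of $s$ and $t$ has valence $3$ or $4$, so I would split into the three cases according to the valences of the pair: both $3$-Euclidean, one $3$- and one $4$-Euclidean, and both $4$-Euclidean. Since $s$ and $t$ are adjacent, $t\in L_s$ and $s\in L_t$, so the link of each vertex is constrained by the presence of the other. I would name the remaining link vertices explicitly, e.g.\ for a $3$-Euclidean $s$ write $L_s=\{t,a,b\}$ so that $s,a,b$ and $s,t,a$ and $s,t,b$ are the triangles of $\St_L(s)$, and similarly record the edges and their labels forced by the Euclidean conditions.

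The heart of the argument is the Euclidean angle equation. For a $3$-Euclidean vertex the three link labels satisfy $\pi/m_{s_0s_1}+\pi/m_{s_1s_2}+\pi/m_{s_2s_0}=\pi$, and for a $4$-Euclidean vertex all four cyclic link labels equal $2$. Because $s$ and $t$ share the edge $\{s,t\}$, that edge's label $m_{st}$ enters the angle sum at both vertices, and the link of $s$ overlaps the link of $t$. I would use the flag (metric flag) condition to pin down exactly which additional edges must be present among the link vertices: the common neighbors of $s$ and $t$ must form simplices, and this is where the small suspensions appear. The key computation is to show that solving both Euclidean equations simultaneously — with $m_{st}\ge 2$ and all other labels $\ge 2$ integers — forces the shared structure to close up into a $3$-, $4$-, or $5$-gon suspension, or else produces an empty Euclidean $4$-circuit, each of which is excluded by hypothesis. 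Concretely, in the both-$4$-Euclidean case the cyclic $2$-labels around each vertex plus the shared edge force a bipartite-type configuration whose ``equator'' is a small polygon; in the mixed and both-$3$ cases the angle equation $\pi/m+\pi/m'+\pi/m''=\pi$ has only finitely many integer solutions ($(3,3,3),(2,4,4),(2,3,6)$), and I would check each against the adjacency of $t$.

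The main obstacle I expect is the bookkeeping in the case analysis: verifying that every way two Euclidean vertices can be adjacent either collapses $L$ onto a suspension of a small polygon or manufactures an empty Euclidean $4$-circuit, without overlooking a configuration. In particular the delicate point is distinguishing when the common link vertices are forced to be connected (producing a genuine simplex and hence a suspension) versus when they are not (producing an empty circuit). I would lean heavily on the metric flag property — that a set of pairwise-adjacent vertices spans a simplex precisely when the corresponding spherical simplex exists — to decide each such case, and on the fact that $L\cong\BS^2$ so that the local picture around the edge $\{s,t\}$ determines a neighborhood that must embed in the sphere. Handling the $4$-Euclidean subtleties is exactly where Lemma \ref{l:full}(b) (excluding the suspension of a $3$-gon so that stars are full) is needed, and I would invoke it to keep the stars well-behaved throughout the analysis.
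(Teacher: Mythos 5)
Your strategy---contradiction, a case split on the valences of the two adjacent Euclidean vertices, and reliance on the metric flag condition together with the excluded suspensions and the absence of empty Euclidean $4$-circuits---is exactly the paper's. The issue is that what you defer as ``bookkeeping'' is the entire content of the proof, and in two of your three cases the resolution is different from (and sharper than) what you sketch. For two adjacent $3$-Euclidean vertices $s,s'$ no enumeration of the triples $(3,3,3),(2,4,4),(2,3,6)$ is needed: since $s'$ has valence $3$ and lies in the $3$-circuit $L_s$, its link is forced to be $\{s\}\cup\bigl(L_s\setminus\{s'\}\bigr)$, which is one of the three $2$-simplices of $\St_L(s)$ and hence spherical with angle sum $>\pi$, contradicting the $3$-Euclidean condition at $s'$; this is the one-line ``$L$ is metric flag'' argument of the paper. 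For a $3$-Euclidean $s$ adjacent to a $4$-Euclidean $s'$, the link condition at $s'$ puts labels $2$ on the edges joining the outer vertex $r$ to the top and bottom vertices $t,b$, and metric flagness then forces $\{r,t,b\}$ to span a $2$-simplex, so $L$ closes up into the suspension of the $3$-gon $\{s',t,b\}$ with suspension points $s$ and $r$---excluded by hypothesis. Only the $4$--$4$ case runs as you describe: the boundary of the star of the edge $\{s,s'\}$ is a $4$-circuit with all edges labeled $2$ (your ``equator''), and the excluded suspensions of $3$-, $4$- and $5$-gons rule out, respectively, the two far vertices coinciding, the circuit bounding two adjacent $2$-simplices, and the circuit being the link of a missing vertex; one also checks that $t$ and $b$ cannot be joined by an edge (metric flagness would create a $2$-simplex incompatible with $L$ being a triangulation of $\BS^2$), whence the circuit is an empty Euclidean $4$-circuit, contrary to hypothesis. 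So your plan is viable and correctly identifies every ingredient, but as written it stops just short of the verifications that constitute the proof; also note that the paper's own argument never actually needs Lemma~\ref{l:full} here.
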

\begin{proof} First, since $L$ is a metric flag, no two $3$-Euclidean vertices are connected by an edge.  

Second, suppose that $s$ and $s'$ are $4$-Euclidean vertices which are connected by an edge.  Then the star of that edge is the configuration pictured in Figure \ref{fig:v4vertices}.  The indicated vertices $v$ and $v'$ cannot coincide, since if they did $L$ would be the suspension of a $3$-gon.  The top and bottom vertices cannot be connected by an edge, since then $\{t,b,s'\}$ would be a spherical subset, and since $L$ is metric flag, it would not be a triangulation of $\BS^2$.  Let $C$ be the boundary of the star in the figure.  If $C$ is the boundary of two adjacent 2-simplices, then $L$ is the suspension of a $4$-gon.   If $C$ is the link of a missing vertex, then $L$ is the suspension of a $5$-gon.  Otherwise, $C$ is an empty Euclidean $4$-circuit, a contradiction.

\begin{figure}[h]
	\hspace{3.5cm}
	\input{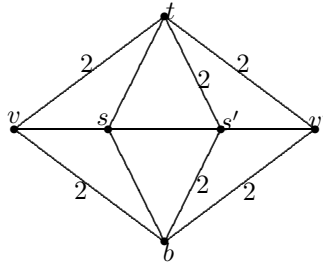}
	\caption{Two $4$-Euclidean vertices connected by an edge.}
	\label{fig:v4vertices}
\end{figure}

Lastly, suppose that $s$, a $3$-Euclidean vertex, and $s'$, a $4$-Euclidean vertex, are connected by an edge.  Then the star of that edge is the configuration pictured in Figure \ref{fig:v3v4vertices}.  Since $L$ is metric flag, $\{r,t,b\}$ is the vertex set of a simplex of $L$ and thus $L$ is the suspension of a $3$-gon, with $s$ and $r$ the suspension points, a contradiction.

\begin{figure}[h]
	\hspace{3.5cm}
	\input{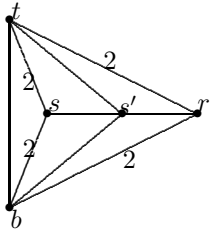}
	\caption{$3$-Euclidean vertex connected to $4$-Euclidean vertex.}
	\label{fig:v3v4vertices}
\end{figure}
 
\end{proof}

\begin{Lemma}\label{l:do2:10.2.4} Suppose $L$ is not the suspension of a $3$-gon.  Let $T$ be a set of Euclidean vertices of $L$, no two of which are connected by an edge.  Then $\gb_{i}(L)=\gb_{i}(L-T)$ for all $i$.
\end{Lemma}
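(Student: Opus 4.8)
The plan is to reduce to the removal of a single vertex and then run a Mayer--Vietoris argument in reduced $\ltwo$-homology, feeding in the acyclicity of the link and star of a Euclidean vertex established in Lemmas \ref{l:linkofeuclidean} and \ref{l:starofeuclidean}.

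First I would set up an induction on $|T|$, removing the vertices of $T$ one at a time. The key point making this legitimate is that no two vertices of $T$ are joined by an edge: removing any other vertex of $T$ does not touch the link $L_s$ of a given $s\in T$, nor the edge-labels around $s$, so $s$ remains a Euclidean vertex in each intermediate complex. Thus it suffices to prove $\gb_i(L)=\gb_i(L-s)$ for a single Euclidean vertex $s$ and then iterate. (Throughout the iteration one must also check that the intermediate complex stays off the list of excluded suspensions, so that Lemmas \ref{l:linkofeuclidean} and \ref{l:starofeuclidean} continue to apply; this is where the standing hypotheses on $L$ together with the independence of $T$ get used.)

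For the single-vertex step, write $L=\St_L(s)\cup(L-s)$ as a union of full subcomplexes whose intersection is the link, $\St_L(s)\cap(L-s)=L_s$. Fullness of $\St_L(s)$ and $L_s$ is precisely Lemma \ref{l:full}, while $L-s$ is full automatically. This nerve-level decomposition lifts to a decomposition of the Davis complex,
\[
\gS_L=W_L\gS_{\St_L(s)}\cup W_L\gS_{L-s},\qquad W_L\gS_{\St_L(s)}\cap W_L\gS_{L-s}=W_L\gS_{L_s},
\]
via the standard fact that special subgroups satisfy $W_A\cap W_B=W_{A\cap B}$. The associated (weakly exact) reduced Mayer--Vietoris sequence reads
\[
\cdots\to \mfh_i(L_s)\to \mfh_i(\St_L(s))\oplus \mfh_i(L-s)\to \mfh_i(L)\to \mfh_{i-1}(L_s)\to\cdots.
\]
By Lemma \ref{l:starofeuclidean} the star is $\ltwo$-acyclic, so $\mfh_i(\St_L(s))=0$, and by Lemma \ref{l:linkofeuclidean} the link is $\ltwo$-acyclic, so $\mfh_i(L_s)=0$, for all $i$ (the modules vanish, not merely their dimensions, since the trace is faithful). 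Substituting these vanishings collapses the sequence to weak isomorphisms $\mfh_i(L-s)\cong\mfh_i(L)$, and taking von Neumann dimensions gives $\gb_i(L-s)=\gb_i(L)$. Iterating over the vertices of $T$ yields $\gb_i(L)=\gb_i(L-T)$.

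The main obstacle I anticipate is making the space-level Mayer--Vietoris rigorous: one must verify that the union and intersection formulas for the translated special subcomplexes $W_L\gS_{(-)}$ actually hold (this rests on fullness, hence on Lemma \ref{l:full}, and on the intersection behavior of special subgroups) and that the resulting reduced $\ltwo$ sequence is weakly exact so that von Neumann dimension behaves additively across it. A secondary, more bookkeeping obstacle is controlling the induction so that no intermediate complex $L-\{s_1,\dots,s_j\}$ degenerates into one of the excluded suspensions, which would otherwise block the application of the acyclicity lemmas.
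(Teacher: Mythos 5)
Your proposal is correct and follows essentially the same route as the paper: the same decomposition $L=\St_L(s)\cup(L-s)$ with intersection $L_s$, the same Mayer--Vietoris sequence, and the same inputs from Lemmas \ref{l:full}, \ref{l:linkofeuclidean} and \ref{l:starofeuclidean}, with the paper leaving the iteration over $T$ implicit where you spell it out. Your worry about intermediate complexes is largely moot since the independence of $T$ means each $\St_L(s_j)$ and $L_{s_j}$ is untouched by the earlier removals, so the acyclicity lemmas are applied only to subcomplexes of the original $L$.
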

\begin{proof} Let $s$ be a Euclidean vertex of $L$.  By Lemma \ref{l:full}, $L_{s}$ and $St_{L}(s)$ are full subcomplexes.  Consider the Mayer-Vietoris sequence:
\[\ldots\to \mfh_{i}(L_{s})\to \mfh_{i}(St_{L}(s))\oplus\mfh_{i}(L-s)\to \mfh_{i}(L)\to\mfh_{i-1}(L_{s})\to\ldots\]
By Lemmas \ref{l:linkofeuclidean} and \ref{l:starofeuclidean}, $\mfh_{i}(L_s)$ and $\mfh_{i}(St_{L}(s))$ vanish for all $i$.  The result follows.
\end{proof}

%
%
Suppose $C$ is an empty Euclidean $3$- or $4$-circuit in $L$.  Then $C$ separates $L$ into two 2-disks, $D_{1}$ and $D_{2}$.  Let $L_{1}$ and $L_{2}$ denote the result of capping off $D_{1}$ and $D_{2}$, respectively (where ``capping off'' means adjoining a cone on the boundary, with edges each labeled $2$).  Let $s_1\in L_{1}$ and $s_2\in L_{2}$ denote the newly introduced cone points.  These are Euclidean vertices.  Since $C$ is an empty circuit, the two resulting triangulations, $L_{1}$ and $L_{2}$, each have fewer vertices than does $L$.  With this set up, we have the following lemma.

\begin{Lemma}\label{l:do2:10.2.7} $\gb_{i}(L)= \gb_{i}(L_{1})+\gb_{i}(L_{2})$ for all $i$.  As a result, $\mfh_{\ast}$ vanishes for $L$ if and only if it vanishes for both $L_{1}$ and $L_{2}$.
\end{Lemma}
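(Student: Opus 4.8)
The plan is to cut $L$ along the circuit, run a Mayer--Vietoris argument of the same type used in the proof of Lemma \ref{l:do2:10.2.4}, and then recognize the two disk contributions as the capped-off complexes. Write $L=D_1\cup D_2$ with $D_1\cap D_2=C$. As in that earlier proof, $C$, $D_1$ and $D_2$ are full subcomplexes, and the decomposition $\gS_L=W_L\gS_{D_1}\cup W_L\gS_{D_2}$ has intersection $W_L\gS_C$, so there is a weakly exact Mayer--Vietoris sequence
\[\cdots\to\mfh_i(C)\to\mfh_i(D_1)\oplus\mfh_i(D_2)\to\mfh_i(L)\to\mfh_{i-1}(C)\to\cdots.\]
Since von Neumann dimension is additive across a weakly exact sequence of Hilbert $W_L$-modules, once $C$ is shown to be $\ltwo$-acyclic the sequence forces $\gb_i(L)=\gb_i(D_1)+\gb_i(D_2)$ for every $i$.

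First I would verify that $C$ is $\ltwo$-acyclic. If $C$ is an empty Euclidean $3$-circuit on $s_0,s_1,s_2$, the relation $\pi/m_{s_0s_1}+\pi/m_{s_1s_2}+\pi/m_{s_2s_0}=\pi$ makes $W_C$ a cocompact Euclidean triangle reflection group, so $\gS_C=\BE^2=\BR^2$. If $C$ is an empty Euclidean $4$-circuit, every edge is labeled $2$ and the two pairs of non-adjacent vertices generate a product of infinite dihedral groups; thus $C$ is a right-angled suspension and $\gS_C=\BR^2$. In either case the Euclidean-space observation of Section \ref{ss:previous} gives $\mfh_\ast(C)=0$.

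Next I would identify $\gb_i(D_j)$ with $\gb_i(L_j)$. In $L_j$ the cone point $s_j$ has link $C$, hence is Euclidean, with $\St_{L_j}(s_j)$ the cone on $C$ and $L_j-s_j=D_j$. Applying Lemma \ref{l:do2:10.2.4} to $L_j$ with the single Euclidean vertex $\{s_j\}$ gives $\gb_i(L_j)=\gb_i(L_j-s_j)=\gb_i(D_j)$. Combined with the dimension count above, this yields $\gb_i(L)=\gb_i(L_1)+\gb_i(L_2)$. The concluding equivalence is then immediate: each $\gb_i$ is a von Neumann dimension, hence non-negative and zero exactly when the underlying module vanishes, so the left-hand side is zero for all $i$ if and only if both summands on the right are.

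The main obstacle is bookkeeping rather than a deep point. To invoke Lemma \ref{l:do2:10.2.4} on $L_j$ I must know that $L_j$ is neither the suspension of a $3$-gon nor the boundary of a $3$-simplex, so that Lemma \ref{l:full} (and hence Lemmas \ref{l:linkofeuclidean} and \ref{l:starofeuclidean}) are available. This should follow from the emptiness hypotheses on $C$: were $L_j$ such a suspension or such a boundary, the valence and link of $s_j$ would force $C$ either to be the link of a vertex of $L$, or to bound two adjacent $2$-simplices of $L$, or to span a single $2$-simplex of $L$, and each of these is excluded by the definition of an empty Euclidean circuit. I must also confirm that the Mayer--Vietoris intersection is exactly $W_L\gS_C$, matching the decomposition underlying Lemma \ref{l:do2:10.2.4}; granting the weak exactness already used there, the additivity of von Neumann dimension closes the argument.
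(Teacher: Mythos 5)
Your proof is correct and takes essentially the same route as the paper: the Mayer--Vietoris sequence along $C$, the observation that $\gS_C=\BR^2$ forces $\mfh_\ast(C)=0$, and Lemma \ref{l:do2:10.2.4} applied to each $L_j$ to replace $\gb_i(L_j-s_j)$ by $\gb_i(L_j)$. The hypothesis checks you flag at the end (that each $L_j$ is not the suspension of a $3$-gon or the boundary of a $3$-simplex) are details the paper leaves implicit, and your derivation of them from conditions (a) and (b) in the definition of an empty Euclidean circuit is sound.
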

\begin{proof} 
Consider the Mayer-Vietoris sequence for $\gS_{L}$:
\[\ldots \mfh_{i}(C)\to \mfh_{i}(L_{1}-s_{1})\oplus \mfh_{i}(L_{2}-s_{2})\to \mfh_{i}(L)\to\mfh_{i-1}(C)\to\ldots\]
$\gS_{C}=\BR^2$ so $\mfh_{\ast}(C)$ vanishes.  Thus $\gb_{i}(L)=\gb_{i}(L_{1}-s_{1})+\gb_{i}(L_{2}-s_2)$ for all $i$.  By Lemma \ref{l:do2:10.2.4}, we have that $\gb_{i}(L_{j}-s_{j})=\gb_{i}(L_{j})$ for all $i$ and for $j=1,2$.  The desired equality is obtained.  
\end{proof}
\noindent
\textbf{Eliminating Euclidean vertices.}  Suppose $L$ is not the suspension of a $3$-, $4$-, or $5$-gon and that $L$ has no empty Euclidean $3$- or $4$-circuits.  Let $T$ denote the set of Euclidean vertices of $L$.  Consider a cellulation $\left[L-T\right]$ of $\BS^{2}$ obtained by replacing stars of $4$-Euclidean vertices by square cells and by replacing stars of $3$-Euclidean vertices by triangular cells.  Then either $L$ is a suspension of a $6$-gon formed from coning on the boundary of Figure \ref{fig:twoedges}, we refer to these as $L_6$-triangulations, or $\left[L-T\right]$ is a well-defined $2$-dimensional cell complex homeomorphic to $\BS^{2}$ with triangular and square faces in the strict sense that any nonempty intersection of two cells is a cell.  

%
\begin{figure}[h]
	\hspace{3.5cm}
	\input{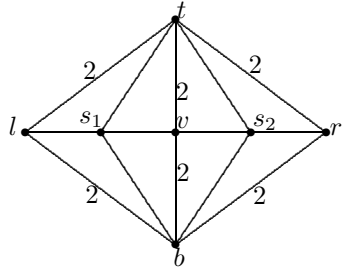}
	\caption{Stars of two $4$-Euclidean vertices intersecting in adjacent edges.}
	\label{fig:twoedges}
\end{figure}

%
%
\begin{Lemma}\label{l:6-gon} $L_6$-triangulations are $\ltwo$-acyclic.
\end{Lemma}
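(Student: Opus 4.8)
The plan is to observe that an $L_6$-triangulation, while not itself a right-angled suspension, becomes one after deleting a suitable set of Euclidean vertices, so that Lemma~\ref{l:do2:10.2.4} reduces us to a case already treated in Section~\ref{ss:previous}. First I would set up notation from Figure~\ref{fig:twoedges}: let $t,b$ be the two suspension points, $s_1,s_2$ the two $4$-Euclidean vertices whose stars meet in adjacent edges, $v$ the vertex they share, $l,r$ the two remaining link vertices, and $c$ the cone point capping off the boundary $4$-circuit $(l,t,r,b)$. Then $L_6$ is the suspension, with poles $t$ and $b$, of the hexagon $(l,s_1,v,s_2,r,c)$. The point worth stressing is \emph{why} $L_6$ is an exceptional case: the edges joining the poles $t,b$ to $s_1$ and $s_2$ may carry labels exceeding $2$, so $L_6$ is genuinely not a right-angled suspension and the join calculation cannot be applied to it as it stands.

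Next I would choose the independent set $T=\{s_1,s_2,c\}$ of Euclidean vertices to delete. The vertices $s_1,s_2$ are $4$-Euclidean by hypothesis, and the key sublemma is that $c$ is $4$-Euclidean as well: its link is precisely the $4$-circuit $(l,t,r,b)$, and each edge of that circuit is a link edge of $s_1$ or of $s_2$ and hence is labeled $2$---note this is forced \emph{irrespective} of the labels on the spokes from $c$, which is exactly why including $c$ in $T$ is harmless. A direct check of incidences shows that no two of $s_1,s_2,c$ are joined by an edge (for instance the neighbours of $s_1$ are $l,t,v,b$, and neither $s_2$ nor $c$ is among them). Since $L_6$ has eight vertices it is not the suspension of a $3$-gon, so Lemma~\ref{l:do2:10.2.4} gives $\gb_i(L_6)=\gb_i(L_6-T)$ for all $i$.

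Finally I would identify $L_6-T$ and appeal to Section~\ref{ss:previous}. Deleting $s_1,s_2,c$ leaves the full subcomplex on $\{t,b,l,v,r\}$, whose edges are exactly those joining $t$ and $b$ to each of $l,v,r$; every one of these is a link edge of $s_1$ or $s_2$ and so is labeled $2$, while $l,v,r$ are pairwise non-adjacent and $t,b$ are not joined. Thus $L_6-T=\{t,b\}\ast\{l,v,r\}$ is a right-angled suspension with suspension points $t,b$, and since $\mfh_\ast$ vanishes for the pair of non-adjacent points $\{t,b\}$, equation~\ref{e:rt-angledjoin} shows it is $\ltwo$-acyclic. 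Combined with the previous step this yields $\gb_i(L_6)=0$ for all $i$. The step I expect to require the most care is the label bookkeeping---verifying both that $c$ is Euclidean and that all edges surviving the deletion are labeled $2$---but both reduce to the single observation that the labels in question are all link labels of $s_1$ or $s_2$, and are therefore equal to $2$.
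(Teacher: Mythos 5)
Your proof is correct, but it is packaged differently from the paper's. The paper covers the $L_6$-triangulation by explicit Mayer--Vietoris decompositions: first it writes $L_6$ as the star of the cone point $c$ (which, as you also observe, is $4$-Euclidean) union the configuration of Figure~\ref{fig:twoedges}, with intersection the all-$2$-labeled boundary $4$-circuit (acyclic since its Davis complex is $\BR^2$); it then covers Figure~\ref{fig:twoedges} by $\St_L(s_1)\cup\St_L(s_2)$, whose intersection is the right-angled suspension $\{t,b\}\ast\{v\}$, and applies Lemma \ref{l:starofeuclidean} to each star. You instead feed the independent set $T=\{s_1,s_2,c\}$ into Lemma \ref{l:do2:10.2.4} in one shot and identify the residual full subcomplex on $\{t,b,l,v,r\}$ as the right-angled suspension $\{t,b\}\ast\{l,v,r\}$, which is $\ltwo$-acyclic by equation \ref{e:rt-angledjoin}. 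The two arguments rest on the same ingredients (acyclicity of stars and links of Euclidean vertices plus Mayer--Vietoris), but yours has the advantage of reusing the already-proved deletion lemma rather than checking the acyclicity of the two intersections by hand, at the cost of the label bookkeeping you flag --- all of which you carry out correctly: the circuit edges $l$--$t$, $t$--$r$, $r$--$b$, $b$--$l$, $t$--$v$, $v$--$b$ are link edges of $s_1$ or $s_2$ and hence labeled $2$, so $c$ is indeed $4$-Euclidean and the surviving complex is genuinely right-angled. Your remark that the spoke labels at $c$, $s_1$, $s_2$ are irrelevant is also the right thing to stress, since it is exactly why $L_6$ itself need not be a right-angled suspension.
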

\begin{proof} Any $L_6$-triangulation is the union of the star of a $4$-Euclidean vertex and the configuration in Figure \ref{fig:twoedges}, with intersection the boundary of the figure.  Figure \ref{fig:twoedges} can be decomposed as  $\St_{L}(s_1)\cup\St_{L}(s_2)$ with intersection being a right-angled suspension.  The desired result follows from Mayer-Vietoris.
\end{proof} 

%
%
\begin{Theorem}\label{t:do2:10.3.1} Suppose that $L$ is not the boundary of a $3$-simplex and not an $L_6$-triangulation.  Suppose also that 
\begin{enumeratea}
	\item $L$ has no empty Euclidean $3$ or $4$-circuits, and
	\item $L$ is not the suspension of a $3$-, $4$- or $5$-gon.
\end{enumeratea}
Let $T$ denote the set of Euclidean vertices of $L$ and let $\left[L-T\right]$ be the cellulation of $\BS^2$ obtained by replacing stars of vertices in $T$ by triangular or square cells.  

Then $K_{\left[L-T\right]}$ can be realized as a (possibly ideal), convex polytope in $\BH^{3}$.  (The ideal vertices correspond to the square or added triangular faces of $\left[L-T\right]$, i.e. to the Euclidean vertices of $L$.)  The resulting classical reflection group is the Coxeter group $W_{L-T}$.  
\end{Theorem}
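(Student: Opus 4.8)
The plan is to verify that $K_{[L-T]}$, carrying the dihedral angles $\pi/m_{st}$ inherited from the edge labels of $L$, satisfies all five conditions of Andreev's theorem (Theorem~\ref{t:2:andreev}), and then to identify the resulting reflection group. First I would set up the dual dictionary between $K_{[L-T]}$ and the cell complex $[L-T]$: the faces of $K_{[L-T]}$ correspond to the vertices of $L-T$ (so the group will be generated by $|S\setminus T|$ reflections), the edges of $K_{[L-T]}$ to the edges of $[L-T]$, which are exactly the edges of $L$ joining two non-Euclidean vertices and carry their original labels (whence every dihedral angle is $\pi/m_{st}\le\pi/2$), and the vertices of $K_{[L-T]}$ to the triangular and square $2$-cells of $[L-T]$. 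Since every $2$-cell is a triangle or a square, exactly three or four faces meet at each vertex, so $K_{[L-T]}$ has the combinatorial type required by Andreev; since $L$ is not the boundary of a $3$-simplex I would also check that $K_{[L-T]}$ is not a simplex, so that Theorem~\ref{t:2:andreev} applies.

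Next I would dispatch conditions (\ref{i:i})--(\ref{i:iii}). For (\ref{i:i}), a trivalent vertex is dual either to a genuine $2$-simplex $\{s_0,s_1,s_2\}$ of $L$, which is a spherical subset and hence satisfies $\pi/m_{s_0s_1}+\pi/m_{s_1s_2}+\pi/m_{s_2s_0}>\pi$, or to the triangular cap of a $3$-Euclidean vertex, where the defining equation gives equality; either way the sum is $\ge\pi$. A $4$-valent vertex is dual to the square cap of a $4$-Euclidean vertex, whose boundary edges are all labeled $2$, so its four angles are $\pi/2$ and sum to $2\pi$. For (\ref{i:ii}), three faces meeting pairwise but with no common vertex correspond to a $3$-circuit of $[L-T]$ bounding no triangular cell; such a circuit bounds no $2$-simplex of $L$ (otherwise, its vertices being non-Euclidean, that simplex would survive as a triangular cell) and is not the link of a $3$-Euclidean vertex (otherwise it would be a triangular cap), so the metric flag condition forces its angle sum to be $\le\pi$ while hypothesis (a) forbids the value $\pi$; hence the sum is $<\pi$. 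For (\ref{i:iii}), a right-angled prismatic $4$-circuit is a $4$-circuit of $[L-T]$ with all labels $2$, no diagonal edge, and bounding no square cell; pulled back to $L$ and combined with the metric flag condition this is exactly an empty Euclidean $4$-circuit, excluded by hypothesis (a).

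Condition (\ref{i:iv}) invokes the suspension hypotheses: $K_{[L-T]}$ is a triangular prism precisely when $[L-T]$ is a triangular bipyramid, i.e.\ a suspension of a $3$-gon whose two apices are dual to the triangular base and top, and the base/top angles are all $\pi/2$ precisely when the six meridian edges are labeled $2$. I would show, by reversing the capping construction, that such a $[L-T]$ can only come from $L$ a suspension of a $3$-, $4$-, or $5$-gon or an $L_6$-triangulation, all excluded by hypothesis (b) and the standing assumptions. The main obstacle is condition (\ref{i:v}). Here $F_1,F_3$ must be the two diagonal vertices of a square cap of a $4$-Euclidean vertex $w$ with link $\{s_1,x,s_3,y\}$ (cyclically ordered, $s_1,s_3$ opposite), and $s_2$ is an external vertex adjacent to both $s_1$ and $s_3$ but lying in no common cell with them. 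Because $s_1s_3$ is not an edge of $L$, the triple $\{s_1,s_2,s_3\}$ is not a $3$-circuit and the clean argument of (\ref{i:ii}) is unavailable. Instead I would assume for contradiction that $\pi/m_{s_1s_2}+\pi/m_{s_2s_3}\ge\pi$, forcing $m_{s_1s_2}=m_{s_2s_3}=2$, and combine the external edges $s_1s_2,s_2s_3$ with the meridian edges $s_3x,xs_1$ to form a $4$-circuit $\{s_1,s_2,s_3,x\}$ with all labels $2$: if its remaining diagonal $s_2x$ is a non-edge this is an empty Euclidean $4$-circuit, contradicting (a); if instead both $s_2x$ and $s_2y$ are edges then $s_2$ and $w$ share the link $\{s_1,x,s_3,y\}$ and $L$ is a suspension of a $4$-gon, contradicting (b); the remaining subcases yield a suspension of a $3$- or $5$-gon exactly as in the proof of Lemma~\ref{l:do2:10.2.3}. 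Verifying that this auxiliary circuit is genuinely empty --- controlling its diagonals via the planarity of $L$ in $\BS^2$ and the metric flag condition --- is the technical heart of the argument.

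Once all five conditions are established, Andreev's theorem realizes $K_{[L-T]}$ as a convex polytope of finite volume in $\BH^3$, its ideal vertices occurring exactly at the square and added triangular faces, i.e.\ at the Euclidean vertices of $L$. Since every dihedral angle $\pi/m_{st}$ is an integer submultiple of $\pi$, Poincar\'e's polyhedron theorem shows that the reflections in the facets of $K_{[L-T]}$ generate a discrete reflection group acting on $\BH^3$ with $K_{[L-T]}$ as strict fundamental chamber; by the correspondence between facets and the vertices of $L-T$, together with the dihedral angles recording the labels $m_{st}$, this group is the Coxeter group $W_{L-T}$, which completes the proof.
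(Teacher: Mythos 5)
Your overall strategy --- verify the five Andreev conditions for $K_{\left[L-T\right]}$ with the inherited dihedral angles, then invoke Poincar\'e's polyhedron theorem to identify the reflection group as $W_{L-T}$ --- is exactly the paper's, and your treatment of conditions (\ref{i:i})--(\ref{i:iii}) and (\ref{i:v}) tracks the paper's proof closely (in places with more detail than the paper supplies). However, there is one genuine gap at the very start: you assert that because $L$ is not the boundary of a $3$-simplex, you can ``check that $K_{\left[L-T\right]}$ is not a simplex, so that Theorem \ref{t:2:andreev} applies.'' That check would fail. Replacing the stars of $3$-Euclidean vertices by triangular cells can collapse a larger triangulation down to the boundary of a $3$-simplex: for instance, take $\partial\Delta^3$ on $\{a,b,c,d\}$ with $m_{ab}=m_{ac}=m_{bc}=m_{ad}=m_{bd}=3$, $m_{cd}=2$, and subdivide the faces $abc$ and $abd$ by new vertices $s$ and $t$ joined to their triangles by edges labeled $2$. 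This $L$ is metric flag, satisfies all the hypotheses of the theorem (it is not a suspension, not an $L_6$-triangulation, and has no empty Euclidean circuits since the two Euclidean $3$-circuits are links of $s$ and $t$), yet $T=\{s,t\}$ and $\left[L-T\right]=\partial\Delta^3$. Andreev's theorem, as stated, explicitly excludes simplices, so your argument cannot conclude in this case.

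The paper handles this by splitting off the simplex case at the outset: if $K_{\left[L-T\right]}$ is a $3$-simplex, then $W_{L-T}$ must be one of the finitely many non-compact hyperbolic simplex reflection groups (Figure $6.3$ of \cite{humphreys}), which act on $\BH^3$ with a finite-volume simplex as fundamental chamber whose ideal vertices are dual to the added triangular faces. You need to add this case (or an equivalent classification argument) before applying Andreev's theorem to the remaining polytopes. A secondary, smaller point: in your verification of condition (\ref{i:iv}) you propose to show that a right-angled triangular prism forces $L$ to be one of the excluded complexes ``by reversing the capping construction''; the paper's route is more direct --- if $\left[L-T\right]$ is the suspension of a $3$-gon and $T\neq\emptyset$, any added triangular cap has edge labels with angle sum exactly $\pi$, hence cannot have all labels $2$, so some suspension edge is labeled $\geq 3$ and condition (\ref{i:iv}) holds. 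Your version is workable but you should carry it out explicitly rather than leave it as an assertion.
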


\begin{proof} If $K_{\left[L-T\right]}$ is a $3$-simplex, then the Coxeter group $W_{L-T}$ is one of the non-compact hyperbolic Coxeter groups shown in Figure $6.3$ of \cite{humphreys}.  Then $W_{L-T}$ acts on $\BH^3$ with fundamental chamber $K_{\left[L-T\right]}$, a finite volume simplex with ideal vertices dual to the added triangular faces of $\left[L-T\right]$.  Otherwise, we prove that $K_{\left[L-T\right]}$ satisfies the conditions of Andreev's theorem.

$\left[L-T\right]$ is a cell-complex with triangular and square faces, so $K_{\left[L-T\right]}$ has no more than three or four faces meeting at any vertex.  Condition (\ref{i:i}) is immediate under our hypothesis.  The remaining conditions refer to certain configurations of faces of the polytope.

$L$ contains no Euclidean vertices nor any empty Euclidean $3$- or $4$-circuits, so it follows that every $3$- or $4$-prismatic element in $K_{\left[L-T\right]}$ satisfies condition (\ref{i:ii}) or (\ref{i:iii}).  $L$ is not the suspension of a $3$-gon, so the only way $K_{\left[L-T\right]}$ can be a triangular prism is if $T$ is nonempty and $\left[L-T\right]$ is the suspension of a $3$-gon.  Then since we replaced the stars of some $3$-Euclidean vertices of $L$ with triangular cells whose three edge labels $m_1$, $m_2$ and $m_3$ have the property that $\frac{\pi}{m_1}+\frac{\pi}{m_2}+\frac{\pi}{m_3}=\pi$, we know that not every suspension line is labeled $2$.  Thus, $K_{\left[L-T\right]}$ satisfies condition (\ref{i:iv}).  

To verify condition (\ref{i:v}) we note that if two faces $F_{1}$ and $F_{3}$ of $K_{\left[L-T\right]}$ intersect at a vertex, but are not adjacent, then this vertex must have valence $4$.  So this vertex corresponds to a square cell of $\left[L-T\right]$, where each edge is labeled $2$, and the two faces are dual to opposite corners $f_{1}$ and $f_{3}$ of the square.  The configuration in condition (\ref{i:v}) has a third face, $F_{2}$, adjacent to both the previous two.  So its dual vertex, $f_{2}$, is connected to both $f_1$ and $f_3$ in $\left[L-T\right]$, and if either $m_{f_{1}f_{3}}\geq 3$ or $m_{f_{2}f_{3}}\geq 3$, condition (\ref{i:v}) is satisfied.  So suppose that both $m_{f_1f_2}$ and $m_{f_2f_3}$ equal $2$.  The square in $\left[L-T\right]$ corresponds to the star of Euclidean vertex in $L$.  If $v$ denotes one of the remaining corners of the square, then the vertices $f_1,f_2,f_3,v$ mark out a $4$ circuit in $L$, each of whose edges is labeled $2$.  Since $\left[L-T\right]$ is a well-defined cell-complex, this circuit cannot be the link of a missing vertex (two edges of added squares would coincide).  But $L$ does not contain empty Euclidean $4$-circuits, so $f_2$ is connected to $v$ ($f_1$ and $f_3$ are not connected because in the set-up of condition (\ref{i:v}), $F_1$ and $F_3$ are non-adjacent faces).  This means that $L$ contains a configuration pictured in Figure \ref{fig:v4vertices}, which according to Lemma \ref{l:do2:10.2.3} is impossible. \end{proof}

\begin{Lemma}\label{l:3,4,5-gons} Suppose that $L$ is the suspension of $3$-,$4$- or $5$-gon.  Then $\mfh_{\ast}(L)$ vanishes.
\end{Lemma}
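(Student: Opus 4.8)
The plan is to use that $K_L$ is an $n$-gonal prism and to dispatch the problem geometrically: in the ``generic'' (hyperbolic) situations I realize the prism, or a capped modification of it, as a finite-volume polytope in $\BH^3$ via Andreev's theorem (Theorem \ref{t:2:andreev}), so that $W_L$ acts on $\BH^3$ with finite covolume and the Cheeger--Gromov and Dodziuk results of Section \ref{ss:previous} give $\mfh_{\ast}(L)=0$; in the degenerate (Euclidean) situations I instead split $L$ by a Mayer--Vietoris argument. Write $L=\{t,b\}\ast C_n$, with $t,b$ the suspension points and $C_n=a_1\cdots a_n$ the (unfilled) equatorial polygon, whose edges carry labels $m_{a_ia_{i+1}}$ and whose join edges carry $m_{ta_i},m_{ba_i}$. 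If every join edge is labeled $2$, then $L$ is a right-angled suspension and $\mfh_{\ast}(L)=0$ by the Suspension result of Section \ref{ss:previous}; so I may assume some join edge exceeds $2$.

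For $n=3$ I would split on the geometry of $C_3$. If $\sum 1/m_{a_ia_{i+1}}=1$, then $C_3$ is Euclidean, so $\gS_{C_3}=\BR^2$ and $\mfh_{\ast}(C_3)=0$; moreover $L_t=L_b=C_3$ shows that $t$ and $b$ are $3$-Euclidean. Capping the disk $\St_L(t)$ with a triangle yields $\partial\Delta^3$, and $K_{[\St_L(t)]}$ is a hyperbolic tetrahedron with a single ideal vertex (dual to the cap, whose vertex link is the Euclidean $C_3$) and three finite vertices (dual to the spherical triangles of $L$); this is a finite-volume simplex from Figure $6.3$ of \cite{humphreys}, so $\mfh_{\ast}(\St_L(t))=0$, and likewise $\mfh_{\ast}(\St_L(b))=0$. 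The Mayer--Vietoris sequence used in the proof of Lemma \ref{l:do2:10.2.4}, applied to $L=\St_L(t)\cup(L-t)$ with intersection $C_3$ and $L-t=\St_L(b)$, then forces $\mfh_{\ast}(L)=0$. If instead $\sum 1/m_{a_ia_{i+1}}<1$, then $C_3$ is hyperbolic and I verify Andreev's conditions for the triangular prism $K_L$: condition (\ref{i:i}) holds since each prism vertex is dual to a spherical triangle of $L$; condition (\ref{i:iii}) holds because opposite side faces of a triangular prism always meet; condition (\ref{i:v}) is vacuous, since in a prism two faces share a vertex only when adjacent; condition (\ref{i:ii}) for the three side faces is exactly $\sum 1/m_{a_ia_{i+1}}<1$; and condition (\ref{i:iv}) holds since some join edge exceeds $2$. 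Thus $K_L$ is realized in $\BH^3$ and $\mfh_{\ast}(L)=0$.

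For $n=4,5$ the argument runs parallel, now with $K_L$ a cube or a pentagonal prism, and since $L$ is not the suspension of a $3$-gon, Lemmas \ref{l:full}, \ref{l:linkofeuclidean} and \ref{l:starofeuclidean} apply to the Euclidean vertices. When $n=4$, the constraint $\sum 1/m_{a_ia_{i+1}}=2$ forces every equatorial edge to be labeled $2$, which is equivalent to $t$ and $b$ being $4$-Euclidean; then $\gS_{C_4}=\BR^2$ and the same Mayer--Vietoris split as for $n=3$ gives $\mfh_{\ast}(L)=0$. Otherwise $C_4$ is hyperbolic, and for $n=5$ the angle count ($\sum 1/m_i\leq 5/2<3$) makes $C_5$ hyperbolic automatically. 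In these hyperbolic-equator cases I replace the star of each $4$-Euclidean equatorial vertex by a square cell to obtain a sphere cell-complex $[L-T]$ and realize $K_{[L-T]}$ in $\BH^3$ via Andreev's theorem, the capped squares producing ideal vertices; for a prism conditions (\ref{i:i}),(\ref{i:ii}),(\ref{i:v}) hold or are vacuous and only condition (\ref{i:iii}) can fail, and it fails precisely at a $4$-Euclidean vertex, which no longer occurs in $[L-T]$. Cheeger--Gromov and Dodziuk again give $\mfh_{\ast}(L)=\mfh_{\ast}([L-T])=0$.

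The main obstacle is making the Andreev verification airtight through careful bookkeeping: one must check that, for each prism and each capped prism, every potential prismatic triple of faces and every cyclically intersecting quadruple either is excluded or forces exactly one of the already-settled degeneracies (a right-angled suspension, a Euclidean equator, or a Euclidean vertex). The delicate point is that capping a $4$-Euclidean equatorial vertex lowers $n$ --- turning an octahedron into a triangular bipyramid, for instance --- so the capped complex can itself be a (now ideal-vertexed) suspension of a smaller polygon, potentially triggering condition (\ref{i:iv}); one must confirm that Andreev still applies to these reduced polytopes, or else fall back on the $n=3$ Mayer--Vietoris argument, so that the case analysis terminates within the configurations handled above.
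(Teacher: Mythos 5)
Your overall strategy (realize $K_L$, or a capped modification of it, in $\BH^3$ via Andreev's theorem, and fall back on Mayer--Vietoris in the Euclidean degenerations) is the paper's strategy, and your $n=3$ case is essentially the paper's Case 1. The gaps are in the cases $n=4,5$, where your prescription ``replace the star of each $4$-Euclidean equatorial vertex by a square cell'' breaks down. First, in the suspension of a $4$-gon the antipodal equatorial vertices $a_1$ and $a_3$ have the \emph{same} link circuit $ta_2ba_4$, so one is $4$-Euclidean if and only if the other is; replacing both stars glues two square cells along their entire common boundary circle, so $[L-T]$ is not a cell complex in the strict sense and $K_{[L-T]}$ does not exist. (The paper avoids Andreev here entirely: it re-chooses $a_1,a_3$ as the suspension points and repeats the Mayer--Vietoris split $L=\St_L(a_1)\cup\St_L(a_3)$ with intersection the right-angled link circuit.) The same degeneration occurs for $n=5$ when two $4$-Euclidean vertices sit at distance two on the equator, since their link squares share the two edges $ta_2$ and $a_2b$.

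Second, your claim that condition (\ref{i:v}) ``holds or is vacuous for a prism'' is applied to the wrong polytope: once a star is capped, $K_{[L-T]}$ is no longer a prism, and the faces dual to $t$ and $b$ now meet, non-adjacently, at the $4$-valent ideal vertex dual to the added square. Condition (\ref{i:v}) then requires $\pi/m_{ta_j}+\pi/m_{a_jb}<\pi$ for every equatorial vertex $a_j$ off that square, and this can fail (both labels equal to $2$) without $L$ being a right-angled suspension and without the offending $a_j$ itself being Euclidean, so the failure is not absorbed by your earlier cases. This is precisely the contingency the paper's Case 3 must handle with a further Mayer--Vietoris decomposition through the configurations of Figures \ref{fig:v4vertices} and \ref{fig:5-gonremains2v2}. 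Your closing paragraph does flag that the capped polytope is delicate, but it locates the danger in condition (\ref{i:iv}) and in ``lowering $n$'' rather than in the actual failure modes (a degenerate $[L-T]$, and a genuine failure of (\ref{i:v})); as written, the $n=4,5$ arguments do not close.
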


\begin{proof} If $K_{L}$ satisfies the conditions of Andreev's theorem, then we are done.  So we consider cases in which $K_{L}$ does not satisfy the conditions of Andreev's theorem.

\textbf{Case 1:} Suppose that $L$ is the suspension of a $3$-gon.  Then the only conditions $K_{L}$ may fail to meet are (\ref{i:ii}) and (\ref{i:iv}).  Suppose $K_{L}$ does not satisfy (\ref{i:ii}).  Then the suspension points, $s$ and $s'$, are $3$-Euclidean vertices.  $L=St_{L}(s)\cup St_{L}(s')$, with $St_{L}(s)\cap St_{L}(s)=L_{s}$, the link of $s$.  Each piece is full in $L$ and $\ltwo$-acyclic, (Lemmas \ref{l:full}, \ref{l:linkofeuclidean} and \ref{l:starofeuclidean}).  So by Mayer-Vietoris, $L$ is $\ltwo$-acyclic.  

Now suppose that $K_{L}$ satisfies (\ref{i:ii}) but does not satisfy (\ref{i:iv}).  Then in $L$, every suspension line is labeled $2$.  Thus $L$ is a right-angled suspension and $\mfh_{\ast}(L)$ vanishes.

\textbf{Case 2:} Suppose that $L$ is the suspension of a $4$-gon.  Then $K_{L}$ immediately satisfies conditions (\ref{i:i}), (\ref{i:ii}), (\ref{i:iv}) and (\ref{i:v}) of Andreev's theorem.  Suppose that $K_{L}$ does not satisfy condition (\ref{i:iii}).  Then $L$ has at least two $4$-Euclidean vertices, denote them $s$ and $s'$, and these can be arranged so that they are the suspension points.  Then $L=St_{L}(s)\cup St_{L}(s')$ with $St_{L}(s)\cap St_{L}(s')=L_{s}$.  Each piece is full in $L$ and $\ltwo$-acyclic.  The result follows from Mayer-Vietoris.

%
\textbf{Case 3:} Lastly, suppose that $L$ is the suspension of a $5$-gon.  Again, $K_{L}$ immediately satisfies conditions (\ref{i:i}),(\ref{i:ii}),(\ref{i:iv}) and (\ref{i:v}) of Andreev's theorem.  If $K_{L}$ does not satisfy condition (\ref{i:iii}), then $L$ has at least one $4$-Euclidean vertex, $v$.  First, suppose that this vertex is not connected by an edge to any other $4$-Euclidean vertex.  Replace the star of this vertex with a square cell, and denote this cell complex  by $\left[L-v\right]$.  The only condition $K_{\left[L-v\right]}$ may fail to meet is (\ref{i:v}).  If $K_{\left[L-v\right]}$ satisfies this condition, then $v$ is the only Euclidean vertex, and $K_{\left[L-v\right]}$ can be realized as an ideal convex polytope in $\BH^3$.  (The ideal vertex corresponding to the square face of $\left[L-v\right]$.)  The resulting reflection group is $W_{L-v}$, and $\mfh_{i}(L-v)$ vanishes for all $i$.  Hence, by Lemma \ref{l:do2:10.2.4}, $\mfh_{i}(L)$ also vanishes.

If $K_{\left[L-v\right]}$ does not satisfy condition (\ref{i:v}), then $L$ decomposes as the $St_{L}(v)$ and the configuration in Figure \ref{fig:v4vertices}.  The intersection is $L_{v}$.  Figure \ref{fig:v4vertices} decomposes as the star of $s$, which is a Euclidean vertex, and the configuration in Figure \ref{fig:5-gonremains2v2}, a right-angled suspension.  The intersection is $L_s$.  All of these parts are full in $L$ and $\ltwo$-acyclic.  Use Mayer-Vietoris to determine that $L$ is $\ltwo$-acyclic.

%
\begin{figure}[placement]
	\hspace{3.5cm}
	\input{5-gonremains2v2}
	\caption{Lemma \ref{l:3,4,5-gons}}
	\label{fig:5-gonremains2v2}
\end{figure}

Next, suppose that $v$ is connected to another Euclidean vertex.  Then in $L$, there is at most one vertex $v'$ of the $5$-gon not connected to the suspension points by edges labeled $2$.  But, it is itself a $4$-Euclidean vertex.  So $L$ decomposes as the $\St_L(v')$ and a right-angled suspension, with intersection $L_v'$.  Each piece is full in $L$ and $\ltwo$-acyclic.  Use Mayer-Vietoris to determine that $L$ is $\ltwo$-acyclic. 
\end{proof}

\begin{Mainthm}\label{t:main} Let $L$ be a metric flag triangulation of $\BS^2$.  Then $\mfh_{i}(L)=0$ for all $i$.
\end{Mainthm}
\begin{proof} We may assume $L$ is not the boundary of a $3$-simplex, not an $L_6$-triangulation, and not the suspension of a $3$-,$4$- or $5$-gon.   If $L$ has no empty Euclidean $3$- or $4$-circuits, then by Theorem \ref{t:do2:10.3.1}, and the results in Section \ref{ss:previous}, $\mfh_{i}(L-T)$ vanishes for all $i$, where $T$ denotes the set of Euclidean vertices.  Hence, by Lemmas \ref{l:do2:10.2.3} and \ref{l:do2:10.2.4}, $\mfh_{i}(L)$ also vanishes.

In every other case, $L$ has an empty Euclidean $3$- or $4$-circuit which we can use to decompose $L$ as, $L=L_{1}\diamond L_{2}$.  Since $L_{1}$ and $L_{2}$ each have fewer vertices than does $L$, this process must eventually terminate.  The theorem follows from Lemma \ref{l:do2:10.2.7}.
\end{proof}

\bibliography{mybib}

\end{document}